\newtheorem{theorem}{Theorem}[section]
\newtheorem{cor}[theorem]{Corollary}
\newtheorem{lem}[theorem]{Lemma}
\newtheorem{prop}[theorem]{Proposition}
\def \Zl {{\mathbb Z}}
\def \Nl {{\mathbb N}}
\def \Rl {{\mathbb R}}
\def \Cl {{\mathbb C}}
\def \vl {{\mathbf v}}
\title{More Circulant Graphs exhibiting Pretty Good State Transfer}
\author{ Hiranmoy Pal\\
Department of Mathematics\\
Indian Institute of Technology Guwahati\\Guwahati, India - 781039\\
Email: hiranmoy@iitg.ernet.in
}
\begin{document}
\maketitle

\vspace{-0.3in}

\begin{center}{Abstract}\end{center}
The transition matrix of a graph $G$ corresponding to the adjacency matrix $A$ is defined by $H(t):=\exp{\left(-itA\right)},$ where $t\in\mathbb{R}$. The graph is said to exhibit pretty good state transfer between a pair of vertices $u$ and $v$ if there exists a sequence $\left\lbrace t_k\right\rbrace$ of real numbers such that $\lim\limits_{k\rightarrow\infty} H(t_k) {\bf e}_u=\gamma {\bf e}_v$, where $\gamma$ is a complex number of unit modulus. We classify some circulant graphs exhibiting or not exhibiting pretty good state transfer. This generalize several pre-existing results on circulant graphs admitting pretty good state transfer.\\
\noindent {\textbf{Keywords}: Graph, Circulant graph, Pretty good state transfer.}\\
\noindent {\textbf{Mathematics Subject Classifications}: 05C12, 05C50}
\newpage
\section{Introduction}
In \cite{god1}, Godsil first introduced the problem of pretty good state transfer (PGST) in the conext of continuous-time quantum walks. State transfer has significant applications in quantum information processing and cryptography (see \cite{ben,ek}). We consider state transfer with respect to the adjacency matrix of a graph. It can be considered with respect the Laplacian matrix as well. However, for regular graphs both considerations provide similar results. All graphs, we consider, are simple, undirected and finite. Now we define PGST on a graph $G$ with adjacency matrix $A$. The transition matrix of $G$ is defined by
\[H(t):=\exp{\left(-itA\right)},~\text{where}~t\in\Rl.\]
Let ${\bf e}_u$ denote the characteristic vector corresponding to the vertex $u$ of $G$. The graph $G$ is said to exhibit PGST between a pair of vertices $u$ and $v$ if there is a sequence $\left\lbrace t_k\right\rbrace$ of real numbers such that
\[\lim_{k\rightarrow\infty} H(t_k) {\bf e}_u=\gamma {\bf e}_v,~\text{where}~\gamma\in\Cl~\text{and}~|\gamma|=1.\]
In the past decade the study of state transfer has received considerable attention. Now we know a handful of graphs exhibiting PGST. The main goal here is to find new graphs that admits PGST. However it is more desirable to find PGST in graphs with large diameters. In \cite{god4}, Godsil \emph{et al.} established that the path on $n$ vertices, denoted by $P_n$, exhibits PGST if and only if $n+1$ equals to either $2^m$ or $p$ or $2p$, where $p$ is an odd prime. A complete characterization of cycles exhibiting PGST appears in \cite{pal4}. There we see that a cycle on $n$ vertices exhibits PGST if and only if $n$ is a power of $2$. In \cite{fan}, Fan \emph{et al.} showed that a double star $S_{k,k}$ admits PGST if and only if $4k+1$ is not a perfect square. A NEPS is a graph product which generalizes several well known graph products, namely, Cartesian product, Kronecker product etc. In \cite{pal3}, we see some NEPS of the path on three vertices having PGST. Also, in \cite{ack,ack1}, PGST has been studied on corona products. Some other relevant results regarding state transfer can be found in \cite{god1, pal2, pal1,pal,saxe}. In the present article, we find some circulant graphs exhibiting or not exhibiting PGST. The results established in this article generalize several pre-existing results appearing in \cite{pal4} on circulant graphs admitting PGST.\par
Let $\left(\Gamma,+\right)$ be a finite abelian group and consider $S\setminus\left\lbrace0\right\rbrace\subseteq\Gamma$ with $\left\lbrace -s:s\in S\right\rbrace=S$. A Cayley graph over $\Gamma$ with the symmetric set $S$ is denoted by $Cay\left(\Gamma,S\right)$. The graph has the vertex set $\Gamma$ where two vertices $a,b\in\Gamma$ are adjacent if and only if $a-b\in S$. The set $S$ is called the connection set of $Cay\left(\Gamma,S\right)$. Let $\Zl_n$ be the cyclic group of order $n$. A circulant graph is a Cayley graph over $\Zl_n$. The cycle $C_n$, in particular, is the circulant graph over $\Zl_n$ with the connection set $\left\lbrace1,n-1\right\rbrace$. The eigenvalues and its corresponding eigenvectors of a cycle are very well known. The eigenvalues of $C_n$ are
\begin{eqnarray}\label{e}
\lambda_l=2\cos{\left(\frac{2l\pi}{n}\right)},\;0\leq l\leq n-1,
\end{eqnarray}
and the corresponding eigenvectors are $\vl_l=\left[1,\omega_n^l,\ldots,\omega_n^{l(n-1)}\right]^T$, where $\omega_n=\exp{\left(\frac{2\pi i}{n}\right)}$ is the primitive $n$-th root of unity.\par
In general, the eigenvalues and eigenvectors of a Cayley graph over an abelian group are also known in terms of characters of the abelian group. In \cite{wal1}, it appears that the eigenvectors of a Cayley graph over an abelian group are independent of the connection set. Therefore the set of eigenvectors of two Cayley defined over an abelian group can be chosen to be equal. Hence we have the following result.
\begin{prop}\label{3ab}
If $S_1$ and $S_2$ are symmetric subsets of an abelian group $\Gamma$ then adjacency matrices of the Cayley garphs $Cay(\Gamma, S_1 )$ and $Cay(\Gamma, S_2 )$ commute.
\end{prop}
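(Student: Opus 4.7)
The plan is to use the fact, cited just before the proposition statement, that the eigenvectors of a Cayley graph over a finite abelian group are independent of the choice of (symmetric) connection set. Once this is in hand, the proposition reduces to the standard linear-algebra fact that two simultaneously diagonalizable matrices commute.

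In more detail, I would first recall the explicit description coming from character theory. For a finite abelian group $\Gamma$, let $\widehat{\Gamma}$ denote its character group. For each character $\chi\in\widehat{\Gamma}$, the vector $\vl_\chi$ with entries $\vl_\chi(g)=\chi(g)$, indexed by $g\in\Gamma$, is an eigenvector of the adjacency matrix $A_S$ of $Cay(\Gamma,S)$ with eigenvalue $\lambda_\chi(S)=\sum_{s\in S}\chi(s)$. This is the content of the result from \cite{wal1} invoked in the paragraph preceding the proposition: the orthogonal basis $\{\vl_\chi:\chi\in\widehat{\Gamma}\}$ does not depend on $S$, while the eigenvalues do.

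Armed with this, I would let $U$ be the (unitary, up to normalization) matrix whose columns are the $\vl_\chi$. Then $U^{-1}A_{S_1}U$ and $U^{-1}A_{S_2}U$ are both diagonal, with diagonal entries $\lambda_\chi(S_1)$ and $\lambda_\chi(S_2)$ respectively. Diagonal matrices commute, so
\[
(U^{-1}A_{S_1}U)(U^{-1}A_{S_2}U)=(U^{-1}A_{S_2}U)(U^{-1}A_{S_1}U),
\]
and conjugating back by $U$ yields $A_{S_1}A_{S_2}=A_{S_2}A_{S_1}$.

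There is no serious obstacle here; the whole argument rests on the character-theoretic description of the spectrum of $Cay(\Gamma,S)$, which is quoted from \cite{wal1}. The only small point to be careful about is that $S$ is assumed symmetric (so that $A_S$ is a real symmetric matrix and the proposition is about honest undirected graphs), but symmetry is not actually needed for the commutativity itself; the character basis diagonalizes $A_S$ for any subset $S\subseteq\Gamma$, symmetric or not.
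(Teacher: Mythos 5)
Your argument is correct and follows essentially the same route as the paper, which justifies the proposition precisely by the observation from \cite{wal1} that the character vectors form a common eigenbasis for all Cayley graphs over $\Gamma$, so the adjacency matrices are simultaneously diagonalizable and hence commute. Your write-up merely makes this simultaneous-diagonalization step explicit, which the paper leaves as a one-line remark.
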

Let $G$ and $H$ be two simple graphs on the same vertex set $V$, and the respective edge sets $E(G)$ and $E(H)$. The edge union of $G$ and $H$, denoted $G\cup H$, is defined on the vertex set $V$ whose edge set is $E(G)\cup E(H)$. Using Proposition \ref{3ab} and the exponential nature the transition matrix, we obtain the following result which allows us to find the transition matrix of an edge union of two edge disjoint Cayley graphs.
\begin{prop}\cite{pal1}\label{4ab}
Let $\Gamma$ be a finite abelian group and consider two disjoint and symmetric subsets $S,T\subset\Gamma$. Suppose the transition matrices of $Cay(\Gamma, S)$ and $ Cay(\Gamma, T)$ are $H_{S}(t)$ and $H_{T}(t)$, respectively. Then $Cay(\Gamma, S\cup T)$ has the transition matrix $H_{S}(t)H_{T}(t).$
\end{prop}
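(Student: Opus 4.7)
The plan is to reduce the statement to the standard identity $\exp(X+Y)=\exp(X)\exp(Y)$ for commuting matrices $X$ and $Y$, using Proposition \ref{3ab} to supply the commutativity.

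First I would observe that since $S$ and $T$ are disjoint subsets of $\Gamma$, the edge sets of $Cay(\Gamma,S)$ and $Cay(\Gamma,T)$ are disjoint, and hence the adjacency matrix of $Cay(\Gamma, S\cup T)$ is exactly $A_S+A_T$, where $A_S$ and $A_T$ denote the adjacency matrices of $Cay(\Gamma,S)$ and $Cay(\Gamma,T)$ respectively. In particular the transition matrix of $Cay(\Gamma,S\cup T)$ equals $\exp\bigl(-it(A_S+A_T)\bigr)$.

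Next I would invoke Proposition \ref{3ab}: because $\Gamma$ is abelian and $S,T$ are symmetric, the adjacency matrices $A_S$ and $A_T$ commute. Consequently $-itA_S$ and $-itA_T$ commute for every $t\in\Rl$, so the classical matrix exponential identity for commuting matrices applies, giving
\[
\exp\bigl(-it(A_S+A_T)\bigr)=\exp(-itA_S)\exp(-itA_T)=H_S(t)H_T(t),
\]
which is exactly the claim.

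There is really no main obstacle here; the only thing one must be careful about is justifying the two ingredients cleanly, namely (i) that the adjacency matrix of the edge union splits additively, which uses the disjointness of $S$ and $T$ together with the definition of a Cayley graph, and (ii) that $\exp(X+Y)=\exp(X)\exp(Y)$ requires $XY=YX$, which is guaranteed here by Proposition \ref{3ab}. If one wished to be fully self-contained one could also note, via the character table of $\Gamma$, that $A_S$ and $A_T$ are simultaneously diagonalized by the character basis, which gives the commutativity directly and also lets one read off the spectrum of $A_S+A_T$ as a sum of the respective eigenvalues on each common eigenvector.
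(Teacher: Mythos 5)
Your argument is correct and follows exactly the route the paper indicates: the disjointness of $S$ and $T$ gives $A_{S\cup T}=A_S+A_T$, Proposition \ref{3ab} supplies commutativity, and the identity $\exp(X+Y)=\exp(X)\exp(Y)$ for commuting matrices yields $H_S(t)H_T(t)$. This is precisely the ``Proposition \ref{3ab} plus the exponential nature of the transition matrix'' argument the paper attributes to \cite{pal1}, so there is nothing to add.
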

Let $n\in\Nl$ and $d$ be a proper divisor of $n$. We denote
\[S_n(d)=\left\lbrace x\in\Zl_n: gcd(x,n)=d\right\rbrace,\]
and for any set $D$ of proper divisors of $n$, we define
\[S_n(D)=\bigcup\limits_{d\in D} S_n(d).\]
The set $S_n(D)$ is called a gcd-set of $\Zl_n$. A graph is called integral if all its eigenvalues are integers. The following theorem determines which circulant graphs are integral.
\begin{theorem}\cite{so}\label{so}
A circulant graph $Cay\left(\Zl_n,S\right)$ is integral if and only if $S$ is a gcd-set.
\end{theorem}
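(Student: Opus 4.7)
The plan is to use the classical eigenvalue formula for Cayley graphs over $\Zl_n$, which generalizes (\ref{e}) from the cycle to an arbitrary symmetric connection set $S$: the eigenvalues of $Cay(\Zl_n,S)$ are
\[
\lambda_j=\sum_{s\in S}\omega_n^{js},\qquad 0\le j\le n-1.
\]
The essential auxiliary input is the Ramanujan sum
\[
c_m(k):=\sum_{\substack{1\le t\le m\\ \gcd(t,m)=1}}\omega_m^{kt},
\]
which is classically a rational integer for every $m$ and $k$.

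For the ``if'' direction, suppose $S=S_n(D)$. I would partition each sum $\lambda_j$ by the value $d=\gcd(s,n)$ and, inside each block, substitute $s=dt$ with $\gcd(t,n/d)=1$; the block then collapses to $c_{n/d}(j)$, giving $\lambda_j=\sum_{d\in D}c_{n/d}(j)\in\Zl$.

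For the ``only if'' direction, I would invoke Galois theory. Each $\lambda_j$ lies in the cyclotomic field $\Ql(\omega_n)$, whose Galois group over $\Ql$ is $(\Zl/n\Zl)^\times$ acting by $\sigma_a:\omega_n\mapsto\omega_n^a$. A direct computation gives $\sigma_a(\lambda_j)=\lambda_{aj}$, so integrality of every $\lambda_j$ forces $\lambda_j=\lambda_{aj}$ for all units $a$. To convert this into a statement about $S$, I would consider the dilated connection set $aS=\{as\bmod n:s\in S\}$, whose $j$-th eigenvalue equals $\lambda_{aj}$ and hence coincides with $\lambda_j$. Since the discrete Fourier transform sending the indicator vector $\chi_S$ to $(\lambda_j)$ is invertible, this forces $aS=S$ for every unit $a$. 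Hence $S$ is a union of orbits of $(\Zl/n\Zl)^\times$ acting on $\Zl_n$ by multiplication, and these orbits are exactly the gcd-classes $S_n(d)$.

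I expect the last step---identifying the unit-multiplication orbits on $\Zl_n$ with the gcd-classes---to be the main technical point. One inclusion is immediate since $\gcd(as,n)=\gcd(s,n)$ whenever $\gcd(a,n)=1$; the converse reduces to surjectivity of the natural reduction map $(\Zl/n\Zl)^\times\to(\Zl/(n/d)\Zl)^\times$, a standard Chinese Remainder Theorem consequence.
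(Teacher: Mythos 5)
Your argument is correct, but note that the paper itself offers no proof of this statement: it is quoted verbatim from the cited reference of So, so there is no internal proof to compare against. Your route is essentially the classical one from the literature: the sufficiency via the observation that each gcd-class contributes a Ramanujan sum $c_{n/d}(j)\in\Zl$ is exactly So's argument, and the necessity via the Galois action $\sigma_a(\lambda_j)=\lambda_{aj}$ on $\Ql(\omega_n)$, Fourier inversion to get $aS=S$ for all units $a$, and the identification of the multiplicative orbits of $(\Zl/n\Zl)^\times$ with the classes $S_n(d)$ (using surjectivity of $(\Zl/n\Zl)^\times\to(\Zl/(n/d)\Zl)^\times$) is the standard approach, used for instance in the Klotz--Sander generalization to abelian groups that this paper also cites. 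All the individual steps you sketch are sound; the only points worth writing out in a full version are the Fourier-inversion step ($\widehat{\chi_{aS}}(j)=\widehat{\chi_S}(aj)$, so equality of all eigenvalues forces $\chi_{aS}=\chi_S$) and the orbit identification, both of which you have correctly isolated as the technical core.
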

It is therefore clear that if a circulant graph $Cay\left(\Zl_n,S\right)$ is non-integral then there exists a proper divisor $d$ of $n$ such that $S\cap S_n(d)$ is a non-empty proper subset of $S_n(d)$. 

\section{Pretty Good State Transfer on Circulant Graphs}
Let us recall some important observations on circulant graphs exhibiting PGST (see \cite{pal4}). We include the following discussion for convenience. Suppose $G$ is a graph with adjacency matrix $A$. It is well known that the matrix $P$ of an automorphism of $G$ commutes with $A$. By the spectral decomposition of the transition matrix $H(t)$ of $G$, we conclude that $H(t)$ is a polynomial in $A$. Therefore $P$ commutes with $H(t)$ as well. Suppose $G$ admits PGST between two vertices $u$ and $v$. Then there exists a sequence $\left\lbrace t_k\right\rbrace$ of real numbers and $\gamma\in\Cl$ with $|\gamma|=1$ such that
\[\lim_{k\rightarrow\infty} H(t_k) {\bf e}_u=\gamma {\bf e}_v.\]
Further this implies that
\[\lim_{k\rightarrow\infty} H(t_k) \left(P{\bf e}_u\right)=\gamma \left(P{\bf e}_v\right).\]
The sequence $\left\lbrace H(t_k){\bf e}_u\right\rbrace$ cannot have two different limits, and therefore if $P$ fixes ${\bf e}_u$ then $P$ must also fix ${\bf e}_v$. For the circulant graph $Cay\left(\Zl_n,S\right)$, we know that the map sending $j$ to $-j$ is an automorphism. As a consequence, we have the following result.
\begin{lem}\label{l0}\cite{pal4}
If pretty good state transfer occurs in a circulant graph $Cay\left(\Zl_n,S\right)$ then $n$ is even and it occurs only between the pair of vertices $u$ and $u+\frac{n}{2}$, where $u,u+\frac{n}{2}\in\Zl_n$.
\end{lem}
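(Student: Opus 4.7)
The plan is to invoke the automorphism principle articulated in the paragraph preceding the lemma with a judiciously chosen automorphism. Specifically, I would define $\pi:\Zl_n\to\Zl_n$ by $\pi(j)=2u-j$. This map is the composition of the ``antipodal'' map $j\mapsto -j$, which is an automorphism of $Cay(\Zl_n,S)$ because $S=-S$, with the translation $j\mapsto j+2u$, which is an automorphism of every Cayley graph over $\Zl_n$. Hence $\pi\in\mathrm{Aut}\bigl(Cay(\Zl_n,S)\bigr)$, and by construction $\pi(u)=u$, so the associated permutation matrix $P$ fixes $\mathbf{e}_u$.

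Having produced $\pi$, I would apply the discussion preceding the lemma essentially verbatim: because $\pi$ is an automorphism, $P$ commutes with the adjacency matrix $A$, and therefore with every polynomial in $A$, in particular with $H(t)$. Since PGST occurs from $u$ to $v$ and $P\mathbf{e}_u=\mathbf{e}_u$, uniqueness of the limit of $\{H(t_k)\mathbf{e}_u\}$ forces $P\mathbf{e}_v=\mathbf{e}_v$, i.e.\ $\pi(v)=v$. Unwinding this identity gives $2u-v\equiv v\pmod n$, equivalently
\[
2(v-u)\equiv 0\pmod n.
\]
As PGST between a pair of vertices is understood to require $u\neq v$, we have $v-u\not\equiv 0\pmod n$, so the displayed congruence compels $n$ to be even and $v-u\equiv\tfrac{n}{2}\pmod n$, which is precisely the desired conclusion.

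I do not expect a genuine obstacle: once the automorphism fixing $u$ is identified, the rest is forced by the framework already set up in the preceding paragraph. The only point requiring a moment's care is the elementary observation that in $\Zl_n$ the equation $2w=0$ has a nonzero solution precisely when $n$ is even, in which case the unique such solution is $w=n/2$.
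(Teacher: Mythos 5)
Your proposal is correct and follows essentially the same route as the paper: the negation map $j\mapsto -j$ (here composed with a translation so that it fixes $u$ directly, rather than first reducing to $u=0$ by vertex transitivity) commutes with $H(t)$, and uniqueness of the limit forces the fixed-point condition $2(v-u)\equiv 0\pmod n$, whence $n$ is even and $v=u+\frac{n}{2}$. This matches the argument sketched in the paragraph preceding the lemma, so no further comment is needed.
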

A graph $G$ is called vertex transitive if for any pair of vertices $u$, $v$ in $G$ there exists an automorphism sending $u$ to $v$. All circulant graphs are well known to be vertex transitive. Thus, by Lemma \ref{l0}, it is enough to find PGST in a circulant graph between the pair of vertices $0$ and $\frac{n}{2}$. First we calculate the $(0,\frac{n}{2})$-th entry of the transition matrix of the cycle $C_n$. Suppose $A=\sum\limits_{l=0}^{n-1}\lambda_lE_l$ is the spectral decomposition of the adjacency matrix of $C_n$. Here notice that $E_l=\frac{1}{n}\vl_l\vl_l^*$, where $\vl_l=\left[1,\omega_n^l,\ldots,\omega_n^{l(n-1)}\right]^T$ as given in $\left(\ref{e}\right)$. Now we obtain the transition matrix of $C_n$ as
\[H(t)=\exp{\left(-itA\right)}=\sum\limits_{l=0}^{n-1}\exp{\left(-i\lambda_l t\right)E_l}.\]
Since $(0,\frac{n}{2})$-th entry of $E_l$ is $\frac{1}{n}\omega_n^{-\frac{nl}{2}}$, we find the $(0,\frac{n}{2})$-th entry of $H(t)$ as 
\begin{eqnarray}\label{e0}
H(t)_{0,\frac{n}{2}}=\frac{1}{n}\sum\limits_{l=0}^{n-1}\exp{\left(-i\lambda_l t\right)}\cdot\omega_{n}^{-\frac{nl}{2}}=\frac{1}{n}\sum\limits_{l=0}^{n-1}\exp{\left[-i\left(\lambda_l t+l\pi\right)\right]}.
\end{eqnarray}
The next result finds some cycles admitting PGST with respect to a sequence in $2\pi\Zl.$ Later we extend this result to more general circulant graphs.
\begin{lem}\label{l1}\cite{pal4}
A cycle $C_{n}$ exhibits pretty good state transfer if $n=2^k,\;k\geq 2$, with respect to a sequence in $2\pi\Zl.$ 
\end{lem}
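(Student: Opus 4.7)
The plan is to evaluate formula $(\ref{e0})$ for $H(t)_{0, n/2}$ along a sequence $t_j = 2\pi m_j$ with $m_j \in \Zl$, where the integers $m_j$ are chosen by simultaneous Diophantine approximation so that every phase in the resulting trigonometric sum asymptotically aligns. By Lemma $\ref{l0}$ together with the vertex-transitivity of circulant graphs, it suffices to exhibit such $m_j$ with $H(2\pi m_j)_{0, n/2}$ tending to a complex number of unit modulus.

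First I would substitute $t = 2\pi m$ into $(\ref{e0})$ and simplify using the spectral symmetries. Pairing $l$ with $n-l$ (legitimate because $\lambda_{n-l} = \lambda_l$ and $n$ is even) and then pairing $l$ with $n/2 - l$ (using $\lambda_{n/2-l} = -\lambda_l$ and the fact that $n/2 = 2^{k-1}$ is even for $k \geq 2$) collapses the formula to
\[
H(2\pi m)_{0, n/2} \;=\; \frac{4}{n}\left[1 + \sum_{l=1}^{n/4 - 1} (-1)^l \cos\!\left(2\pi m \lambda_l\right)\right],
\]
provided the four boundary contributions from $l \in \{0,\,n/4,\,n/2,\,3n/4\}$ each evaluate to $+1$, which in turn requires $n/4 = 2^{k-2}$ to be even. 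For $k \geq 3$ this is automatic; the case $k = 2$, which is $C_4$, can be treated separately using its perfect state transfer at $t = \pi/2$. Driving the displayed real quantity to $1$ is equivalent to the simultaneous Diophantine requirement
\[
m_j \lambda_l \;\longrightarrow\; \frac{l}{2} \pmod{1} \qquad \bigl(l = 1, 2, \ldots, 2^{k-2} - 1\bigr),
\]
since each cosine then converges to $(-1)^l$.

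The main step is to produce such a sequence $m_j$ via Kronecker's theorem, and the principal obstacle is verifying that $\{1, \lambda_1, \lambda_2, \ldots, \lambda_{2^{k-2}-1}\}$ is linearly independent over $\Ql$; once this is known, Kronecker's compatibility condition becomes vacuous and arbitrary targets can be approximated. I would prove the independence by noting that every $\lambda_l$ lies in the maximal real subfield $K = \Ql\!\left(\zeta_{2^k} + \zeta_{2^k}^{-1}\right)$ of the cyclotomic field $\Ql(\zeta_{2^k})$, which has degree $2^{k-2}$ over $\Ql$. The Chebyshev identity $\lambda_l = 2 T_l(\lambda_1/2)$ together with the fact that $T_l(x)$ has leading coefficient $2^{l-1}$ shows that $\lambda_l$ is monic of exact degree $l$ in $\lambda_1$. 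Consequently the transition matrix from the power basis $\{1, \lambda_1, \lambda_1^2, \ldots, \lambda_1^{2^{k-2}-1}\}$ of $K$ to $\{1, \lambda_1, \lambda_2, \ldots, \lambda_{2^{k-2}-1}\}$ is upper triangular with unit diagonal, hence invertible. This yields the required $\Ql$-basis; Kronecker's theorem then supplies the desired $m_j$, and the closed form above delivers $H(2\pi m_j)_{0, n/2} \to 1$, so that pretty good state transfer occurs between $0$ and $n/2$ along the sequence $t_j = 2\pi m_j \in 2\pi \Zl$.
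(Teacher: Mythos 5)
Your main argument is sound and, for $k\ge 3$, it does prove the lemma: the collapse of $(\ref{e0})$ at $t=2\pi m$ to $\frac{4}{n}\bigl[1+\sum_{l=1}^{n/4-1}(-1)^l\cos(2\pi m\lambda_l)\bigr]$ checks out (pair $l$ with $n-l$, $n/2-l$ and $n/2+l$, and the four exceptional indices $0,n/4,n/2,3n/4$ each give $+1$ once $2^{k-2}$ is even); the Chebyshev identity $\lambda_l=2T_l(\lambda_1/2)$ with monic normalization does give a unitriangular change of basis, so $\{1,\lambda_1,\dots,\lambda_{2^{k-2}-1}\}$ is a $\Ql$-basis of the degree-$2^{k-2}$ real cyclotomic field and in particular $\Ql$-linearly independent; Kronecker's theorem then supplies integers $m_j$ with $m_j\lambda_l\to l/2\pmod 1$, whence $H(2\pi m_j)_{0,n/2}\to 1$. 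Be aware that the paper itself does not prove this lemma but imports it from \cite{pal4}, and the proof there runs on the same two ingredients (Kronecker's approximation theorem plus $\Ql$-linear independence of the relevant cosines), so your route is essentially the intended one rather than a genuinely different argument.

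The genuine gap is your treatment of $k=2$. You propose to handle $C_4$ "separately using its perfect state transfer at $t=\pi/2$", but the lemma demands a time sequence lying in $2\pi\Zl$, and $\pi/2\notin 2\pi\Zl$; no appeal to periodicity can move this time into $2\pi\Zl$. In fact no sequence in $2\pi\Zl$ can work for $C_4$ at all: its eigenvalues $2,0,0,-2$ are integers, so $H(2\pi m)=I$ for every $m\in\Zl$ and the $(0,2)$ entry vanishes identically along $2\pi\Zl$. So the statement with the "$2\pi\Zl$" clause is only attainable for $k\ge 3$, and your proof should say so explicitly instead of deferring $C_4$ to an argument that cannot satisfy the time-sequence restriction. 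This limitation is harmless for the rest of the paper: in the proof of Theorem \ref{mt2} the lemma is invoked for the cycle of length $n/d$, and symmetry of $S\cap S_n(d)$ forces $n/d\ge 8$ there, so only the $k\ge 3$ case is ever used.
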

A graph $G$ is said to be almost periodic if there is a sequence $\left\lbrace t_k\right\rbrace$ of real numbers and a complex number $\gamma$ of unit modulus such that $\lim\limits_{k\rightarrow\infty} H(t_k) =\gamma I,$ where $I$ is the identity matrix of appropriate order. Since the circulant graphs are vertex transitive, we observe that a circulant graph is almost periodic if and only if $\lim\limits_{k\rightarrow\infty} H(t_k) {\bf e}_0=\gamma {\bf e}_0.$ This implies that if a cycle admits PGST then it is necessarily almost periodic. Hence, by Lemma \ref{l1}, all cycles of size $n=2^k,\;k\geq 2$, are almost periodic. However, in the next result, we find a connection between the time sequences of two cycles one of which exhibits PGST and the other one is almost periodic.
\begin{lem}\label{mt1}
Let $k,k'\in \Nl$ and $2\leq k'<k$. If the cycle $C_{2^k}$ admits pretty good state transfer with respect to a sequence $\left\lbrace t_m\right\rbrace$ then $C_{2^{k'}}$ is almost periodic with respect to a sub-sequence of $\left\lbrace t_m\right\rbrace$. 
\end{lem}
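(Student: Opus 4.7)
The plan is to read off the phase of $e^{-i\lambda t_m}$ for each eigenvalue $\lambda$ of $C_{2^k}$ from the PGST hypothesis, and then to observe that these phases coincide at precisely the eigenvalues that also appear in $C_{2^{k'}}$. Write $n=2^k$ and $n'=2^{k'}$. The distinct eigenvalues of $C_n$ are $\lambda_l=2\cos(l\pi/2^{k-1})$ for $0\leq l\leq 2^{k-1}$, while those of $C_{n'}$ are $\mu_j=2\cos(j\pi/2^{k'-1})$ for $0\leq j\leq 2^{k'-1}$. A direct substitution yields $\mu_j=\lambda_{j\cdot 2^{k-k'}}$, so the spectrum of $C_{n'}$ embeds into that of $C_n$ through the index map $j\mapsto j\cdot 2^{k-k'}$.

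Next I would unpack what PGST forces on individual eigenvalue phases. Using $E_l=\tfrac{1}{n}\vl_l\vl_l^{\ast}$ from (\ref{e}) together with $\omega_n^{l\cdot n/2}=(-1)^l$, a short computation (treating $l=0$, $0<l<n/2$, and $l=n/2$ separately) shows that the spectral projector $E_{\lambda_l}$ of $C_n$ for each distinct eigenvalue $\lambda_l$ satisfies $E_{\lambda_l}{\bf e}_{n/2}=(-1)^l E_{\lambda_l}{\bf e}_0$. Consequently the PGST hypothesis $H(t_m){\bf e}_0\to\gamma {\bf e}_{n/2}$ becomes
\[\sum_l\Bigl(e^{-i\lambda_l t_m}-\gamma(-1)^l\Bigr)E_{\lambda_l}{\bf e}_0\longrightarrow 0.\]
Because the vectors $E_{\lambda_l}{\bf e}_0$, one per distinct eigenvalue, are pairwise orthogonal and nonzero, taking inner products with each one forces $e^{-i\lambda_l t_m}\to\gamma(-1)^l$ individually for every admissible $l$.

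Finally I would restrict to the indices $l=j\cdot 2^{k-k'}$ corresponding to the eigenvalues of $C_{n'}$. Since $k-k'\geq 1$, each such $l$ is even, so $(-1)^l=1$ and hence $e^{-i\mu_j t_m}\to\gamma$ for every $0\leq j\leq 2^{k'-1}$. Writing $H'(t)=\sum_j e^{-i\mu_j t}E'_{\mu_j}$ for the spectral decomposition of the transition matrix of $C_{n'}$, this yields $H'(t_m)\to\gamma\sum_j E'_{\mu_j}=\gamma I$ with $|\gamma|=1$, so $C_{n'}$ is almost periodic along $\{t_m\}$ itself, and in particular along any sub-sequence as required. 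The only delicate step is the sign identification $E_{\lambda_l}{\bf e}_{n/2}=(-1)^l E_{\lambda_l}{\bf e}_0$; once that is in place, the dyadic shift $l=j\cdot 2^{k-k'}$ automatically eliminates the sign, and the lemma follows.
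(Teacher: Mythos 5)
Your proof is correct, and its skeleton is the same as the paper's: embed the spectrum of $C_{2^{k'}}$ into that of $C_{2^k}$ via $l\mapsto l\cdot 2^{k-k'}$, show that PGST forces each individual phase $e^{-i\lambda_l t_m}$ to converge to $\pm\gamma$ with sign $(-1)^l$, and note that the embedded indices are even so the sign disappears. The only real difference is how the entrywise phase convergence is extracted. The paper works with the scalar $(0,\tfrac n2)$-entry formula (a sum of $2^k$ unimodular terms converging to $2^k\gamma$), invokes compactness of the circle to pass to a subsequence along which every term converges, and then identifies each limit with $\gamma$; this is why the lemma is stated for a sub-sequence of $\left\lbrace t_m\right\rbrace$. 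You instead expand $H(t_m){\bf e}_0-\gamma{\bf e}_{n/2}$ over the spectral projectors and use the identity $E_{\lambda_l}{\bf e}_{n/2}=(-1)^l E_{\lambda_l}{\bf e}_0$ (which you verify correctly, including the edge cases $l=0$ and $l=n/2$) together with orthogonality and nonvanishing of the vectors $E_{\lambda_l}{\bf e}_0$ (nonzero since $(E_{\lambda_l})_{00}>0$ for a circulant). This buys a slightly stronger conclusion with no subsequence extraction at all: $C_{2^{k'}}$ is almost periodic along the full sequence $\left\lbrace t_m\right\rbrace$, which of course implies the stated sub-sequence version used later in Theorem \ref{mt2}.
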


\begin{proof}
We begin with the observation that if $k'<k$ then
\[\cos{\left(\frac{2l\pi}{2^{k'}}\right)}=\cos{\left(\frac{2\left(l\cdot 2^{k-k'}\right)\pi}{2^{k}}\right)}.\]
Consequently, if $\theta_l$'s and $\lambda_l$'s are the eigenvalues of $C_{2^{k'}}$ and $C_{2^k}$, respectively, then we necessarily have $\theta_l=\lambda_{l\cdot 2^{k-k'}}$, where $0\leq l\leq 2^{k'}-1.$\par
Suppose $C_{2^k}$ admits PGST with respect to a time sequence $\left\lbrace t_m\right\rbrace$. Therefore, by Equation $\left(\ref{e0}\right)$, there exists a complex number $\gamma$ with $|\gamma|=1$ such that 
\[\lim_{m\rightarrow\infty}\sum\limits_{l=0}^{2^k-1}\exp{\left[-i\left(\lambda_l t_m+l\pi\right)\right]}=2^k\gamma.\]
Since the unit circle is compact, we have a subsequence $\left\lbrace t'_m\right\rbrace$ of $\left\lbrace t_m\right\rbrace$ such that
\[\lim_{m\rightarrow\infty}\exp{[-i\left(\lambda_l t'_m+l\pi\right)]}=\gamma,\text{ for }0\leq l\leq 2^k-1.\]
This further gives
\[\lim_{m\rightarrow\infty}\exp{[-i\left(\theta_l t'_m\right)]}=\gamma,\text{ for }0\leq l\leq 2^{k'}-1.\]
Hence we have
\begin{eqnarray}\label{et1}
\lim_{m\rightarrow\infty}\sum\limits_{l=0}^{2^{k'}-1}\exp{\left[-i\left(\theta_l t'_m\right)\right]}=2^{k'}\gamma.
\end{eqnarray}
Notice that the $00$-th entry of the transition matrix of $C_{2^{k'}}$ is $\frac{1}{2^{k'}}\sum\limits_{l=0}^{2^{k'}-1}\exp{\left[-i\left(\theta_l t\right)\right]},~t\in\Rl.$ Therefore, by Equation $\left(\ref{et1}\right)$, we conclude that $C_{2^{k'}}$ is almost periodic with respect to $\left\lbrace t'_m\right\rbrace$. Hence we have the desired result.
\end{proof}
Now we extend Lemma \ref{l1} to obtain more circulant graphs (apart from the cycles) on $2^k$ vertices exhibiting PGST. Also we find some circulant graphs which are almost periodic. It can well be observed that Theorem $7$ appearing in \cite{pal4} becomes a special case to the following theorem.  

\begin{theorem}\label{mt2}
Let $k\in\Nl$ and $n=2^k$. Also let $Cay\left(\Zl_n,S\right)$ be a non-integral circulant graph. Let $d$ be the least among all the divisors of $n$ so that $S\cap S_n(d)$ is a non-empty proper subset of $S_n(d)$. If $\left|S\cap S_n(d)\right|\equiv 2~(mod~4)$ then $Cay\left(\Zl_n,S\right)$ admits pretty good state transfer with respect to a sequence in $2\pi\Zl$. Moreover, if $\left|S\cap S_n(d)\right|\equiv 0~(mod~4)$ then $Cay\left(\Zl_n,S\right)$ is almost periodic with respect to a sequence in $2\pi\Zl$.
\end{theorem}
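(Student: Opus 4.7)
The plan is to decompose $S$ according to how it intersects the gcd-sets $S_n(d')$, apply the spectral decomposition (an equivalent form of Proposition \ref{4ab}), and analyse each piece along a subsequence extracted from the PGST sequence of the smaller cycle $C_{n/d}$. Write
\[
S \;=\; S' \,\cup\, B \,\cup\, S'', \quad S' = \bigcup_{d' < d}\!(S\cap S_n(d')),\ B = S\cap S_n(d),\ S'' = \bigcup_{d' > d}\!(S\cap S_n(d')).
\]
By the minimality of $d$, each $S\cap S_n(d')$ for $d' < d$ is either empty or all of $S_n(d')$, so $S'$ is a gcd-set and $Cay(\Zl_n,S')$ is integral by Theorem \ref{so}. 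Hence the eigenvalues $\mu_l^{S'}$ are integers and $\exp(-i\mu_l^{S'} t)=1$ for every $t\in 2\pi\Zl$.

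Since $B$ is a non-empty \emph{proper} symmetric subset of $S_n(d)$, we need $|S_n(d)|\ge 4$; writing $d=2^j$, this forces $n/d=2^{k-j}\ge 8$. Lemma \ref{l1} then supplies a sequence $\{s_m\}\subset 2\pi\Zl$ along which $C_{n/d}$ exhibits PGST. Running the compactness argument of Lemma \ref{mt1}, I pass to a subsequence $\{s'_m\}$ and a unit complex number $\gamma$ so that
\[
\exp\!\bigl(-i\lambda^{(k-j)}_{l'} s'_m\bigr) \longrightarrow \gamma(-1)^{l'} \quad\text{for every } l'\in\{0,1,\ldots,2^{k-j}-1\},
\]
where $\lambda^{(k-j)}_{l'}=2\cos(2\pi l'/2^{k-j})$ are the eigenvalues of $C_{n/d}$.

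I then evaluate $\exp(-i\mu_l^S s'_m)$ piece by piece. Writing $B=\{ds'':s''\in T\}$ with $T=\{\pm s_1,\ldots,\pm s_r\}$ a symmetric set of odd residues modulo $n/d$ (so $r=|B|/2$), one has $\mu_l^B=\sum_{i=1}^r\lambda^{(k-j)}_{s_i l\bmod 2^{k-j}}$, and taking the limit factor by factor yields
\[
\exp(-i\mu_l^B s'_m)\longrightarrow \gamma^r\prod_{i=1}^r (-1)^{s_i l}=\gamma^r(-1)^{lr},
\]
using the fact that each $s_i$ is odd and hence $\sum_i s_i\equiv r\pmod 2$. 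For each $d'=2^{j'}$ with $d<d'<n/2$, the identity $\lambda^{(k-j')}_{l'}=\lambda^{(k-j)}_{l'\cdot 2^{j'-j}}$ together with the evenness of $2^{j'-j}$ kills the sign factor, so $\exp(-i\mu_l^{S\cap S_n(d')} s'_m)\to\gamma^{r_{d'}}$ with $r_{d'}=|S\cap S_n(d')|/2$. The lone remaining possibility $d'=n/2$ contributes an integer eigenvalue $\pm 1$ and hence factor $1$. Multiplying through,
\[
\exp(-i\mu_l^S s'_m)\longrightarrow \gamma^R(-1)^{lr},\qquad R:=r+\sum_{d<d'<n/2}r_{d'}.
\]

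Substituting into $(H_S(t))_{0,c}=\frac{1}{n}\sum_l\exp(-i\mu_l^S t)\,\omega_n^{-lc}$ gives
\[
(H_S(s'_m))_{0,c}\longrightarrow \frac{\gamma^R}{n}\sum_{l=0}^{n-1}(-1)^{lr}\,\omega_n^{-lc},
\]
and recognising $(-1)^l=\omega_n^{ln/2}$ shows that for $r$ odd the only nonzero limit occurs at $c=n/2$, giving PGST from $0$ to $n/2$ with phase $\gamma^R$, while for $r$ even the only nonzero limit occurs at $c=0$, giving $H_S(s'_m)\to\gamma^R I$, i.e.\ almost periodicity. Since $r$ odd is exactly $|B|\equiv 2\pmod 4$ and $r$ even is exactly $|B|\equiv 0\pmod 4$, both conclusions of the theorem follow with the sequence $\{s'_m\}\subset 2\pi\Zl$. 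I expect the main obstacle to be the bookkeeping: relating eigenvalues of the various $Cay(\Zl_n,S\cap S_n(d'))$ back to those of $C_{n/d}$ via the index shift $l'\mapsto l'\cdot 2^{j'-j}$, and tracking the $(-1)^{l'}$ phase from the compactness argument cleanly through each parity sum so that the pivotal sign $(-1)^{lr}$, which supplies the mod-$4$ dichotomy, emerges at the end.
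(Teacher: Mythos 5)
Your proof is correct, and it rests on the same three ingredients as the paper's argument -- Lemma \ref{l1} for the base cycle $C_{n/d}$, the compactness extraction of a subsequence along which every eigenvalue phase of that cycle converges (the engine of Lemma \ref{mt1}), and integrality of gcd-sets so that their contribution is trivial at times in $2\pi\Zl$ -- but you run it at the spectral, entrywise level instead of the paper's operator level. The paper splits $S$ into $S\cap S_n(d)$, the union of full gcd-classes, and a remainder $S''$, factors the transition matrix as a product via Proposition \ref{4ab}, obtains the mod-$4$ dichotomy by chaining the antipodal transfer ${\bf e}_0\mapsto{\bf e}_{\frac{n}{2}}\mapsto{\bf e}_0$ through an odd or even number of factors $H_x$, and invokes Lemma \ref{mt1} to make the graphs $Cay\left(\Zl_n,\left\lbrace s,n-s\right\rbrace\right)$, $s\in S''$, almost periodic along a further subsequence. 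You instead expand the $(0,c)$-entry of $H_S(t)$ over the common eigenbasis, prove that each eigenvalue exponential converges to $\gamma^{R}(-1)^{lr}$, and read off both conclusions from Fourier orthogonality, so the parity of $r=\left|S\cap S_n(d)\right|/2$ enters as the explicit sign $(-1)^{lr}$ rather than as the parity of the number of matrix factors. This variant buys two small things: the classes with divisor $d'>d$ are disposed of in one line (limit $\gamma^{r_{d'}}$, the sign dying because the index $l'\cdot2^{j'-j}$ is even), with no separate almost-periodicity statement needed, and you note explicitly that a nonempty proper symmetric subset of $S_n(d)$ forces $n/d\geq 8$, so Lemma \ref{l1} is only ever used for cycles of length at least $8$ -- a point the paper leaves implicit, and which matters since $C_4$ is integral and does nothing nontrivial at times in $2\pi\Zl$. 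The paper's route is more modular in that it reuses Proposition \ref{4ab} and Lemma \ref{mt1} as black boxes; yours keeps the phase bookkeeping, including the final phase $\gamma^{R}$, completely explicit.
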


\begin{proof}
For each $x\in S\cap S_n(d)$, observe that $Cay\left(\Zl_n,\left\lbrace x,n-x\right\rbrace\right)$ is isomorphic to the disjoint union of $d$ copies of the cycle $Cay\left(\Zl_\frac{n}{d},\left\lbrace1,\frac{n}{d}-1\right\rbrace\right)$. By Lemma \ref{l1}, there exists a sequence $\left\lbrace t_m\right\rbrace$ in $2\pi\Zl$ such that $Cay\left(\Zl_\frac{n}{d},\left\lbrace1,\frac{n}{d}-1\right\rbrace\right)$ admits PGST between the pair of vertices $0$ and $\frac{n}{2d}$ with respect to $\left\lbrace t_m\right\rbrace$. In particular, the cycle $\left(0,x,\ldots,\frac{nx}{2d},\ldots,\left(\frac{n}{d}-1\right)x\right)$ admits PGST between the pair of vertices $0$ and $\frac{nx}{2d}$. Note that $\frac{nx}{2d}\equiv\frac{n}{2}~(mod~n)$. This in turn implies that $Cay\left(\Zl_n,\left\lbrace x,n-x\right\rbrace\right)$ exhibits PGST between the pair of vertices $0$ and $\frac{n}{2}$ with respect to $\left\lbrace t_m\right\rbrace$. If $H_{S\cap S_n(d)}(t)$ is the transition matrix of $Cay\left(\Zl_n,S\cap S_n(d)\right)$ then by Proposition \ref{4ab}, we can express $H_{S\cap S_n(d)}(t)$ as
\begin{eqnarray}\label{1mt2}
H_{S\cap S_n(d)}(t)=\prod\limits_{x\in S\cap S_n(d),~x<\frac{n}{2}}H_{x}(t),
\end{eqnarray}
where $H_{x}(t)$ denotes the transition matrix of $Cay\left(\Zl_n,\left\lbrace x,n-x\right\rbrace\right)$ for all $x\in S\cap S_n(d)$. In Equation $\left(\ref{1mt2}\right)$, notice that if $\left|S\cap S_n(d)\right|\equiv 2~(mod~4)$ then there are odd number of matrices in the product. Also we have just concluded that for all $x\in S\cap S_n(d)$
\begin{eqnarray}\label{11mt2}
\lim_{m\rightarrow\infty} H_{x}(t_m){\bf e}_0=\gamma{\bf e}_{\frac{n}{2}},~|\gamma|=1.
\end{eqnarray}
Since transition matrices are symmetric, we conclude that
\begin{eqnarray}\label{12mt2}
\lim_{m\rightarrow\infty} H_{x}(t_m){\bf e}_{\frac{n}{2}}=\gamma{\bf e}_0,~\forall x\in S\cap S_n(d).
\end{eqnarray}
After successive application of $\left(\ref{11mt2}\right)$ and $\left(\ref{12mt2}\right)$, we obtain the following from Equation $\left(\ref{1mt2}\right)$.
\begin{eqnarray}\label{2mt2}
\lim_{m\rightarrow\infty} H_{S\cap S_n(d)}(t_m){\bf e}_0=\lim_{m\rightarrow\infty} \left[\prod\limits_{x\in S\cap S_n(d),~x<\frac{n}{2}}H_{x}(t_m)\right]{\bf e}_0=\gamma^{\frac{|S\cap S_n(d)|}{2}}{\bf e}_{\frac{n}{2}}.
\end{eqnarray}
Let $S'=\left\lbrace x\in S~|~ gcd(x,n)=d'~\text{and}~S_{d'}\subset S\right\rbrace.$ Notice that if $S'$ is non-empty then it is in fact a gcd-set. Therefore, Theorem \ref{so} implies that the sub-graph $Cay\left(\Zl_n,S'\right)$ of $Cay\left(\Zl_n,S\right)$ is integral. Suppose $H_{S'}(t)$ is the transition matrix of $Cay\left(\Zl_n,S'\right)$. Using spectral decomposition of $H_{S'}(t)$, we deduce that
\begin{eqnarray}\label{21mt2}
H_{S'}(t)=I,~t\in 2\pi\Zl.
\end{eqnarray} 
Now consider $S''=S\setminus\left(S'\cup \left(S\cap S_n(d)\right)\right).$ By the assumption on the divisor $d$ of $n$ and the choice of $S''$, we must have $gcd(s,n)>d$ for all $s\in S''$. This in turn implies that the size of each disjoint cycles appearing in $Cay\left(\Zl_n,\left\lbrace s,n-s\right\rbrace\right)$ is strictly less than the size of the cycle $Cay\left(\Zl_\frac{n}{d},\left\lbrace 1,\frac{n}{d}-1\right\rbrace\right)$. Recall that $Cay\left(\Zl_\frac{n}{d},\left\lbrace 1,\frac{n}{d}-1\right\rbrace\right)$ admits PGST with respect to the sequence $\left\lbrace t_m\right\rbrace$. Therefore, by Theorem \ref{mt1}, we have a subsequence of $\left\lbrace t_m\right\rbrace$ with respect to which each component of $Cay\left(\Zl_n,\left\lbrace s,n-s\right\rbrace\right)$ is almost periodic. Hence $Cay\left(\Zl_n,\left\lbrace s,n-s\right\rbrace\right)$ is also almost periodic. Since $S''$ is finite, we can appropriately choose a subsequence $\left\lbrace t_{m_r}\right\rbrace$ of $\left\lbrace t_m\right\rbrace$ such that for all $s\in S''$, the graph $Cay\left(\Zl_n,\left\lbrace s,n-s\right\rbrace\right)$ is almost periodic with respect to $\left\lbrace t_{m_r}\right\rbrace.$ Hence, by Proposition \ref{4ab}, the graph $Cay\left(\Zl_n,S''\right)$ is almost periodic with respect to $\left\lbrace t_{m_r}\right\rbrace.$ Therefore, if $H_{S''}(t)$ is the transition matrix of $Cay\left(\Zl_n,S''\right)$ then we have
\begin{eqnarray}\label{3mt2}
\lim_{r\rightarrow\infty}H_{S''}\left(t_{m_r}\right){\bf e}_0=\zeta {\bf e}_0,~|\zeta|=1.
\end{eqnarray}
Since $\left\lbrace t_{m_r}\right\rbrace$ is a subsequence of $\left\lbrace t_m\right\rbrace$, notice that $Cay\left(\Zl_n,S\cap S_n(d)\right)$ also admits PGST with respect to $\left\lbrace t_{m_r}\right\rbrace.$ Therefore, from $\left(\ref{2mt2}\right)$, we get
\begin{eqnarray}\label{4mt2}
\lim_{r\rightarrow\infty} H_{S\cap S_n(d)}\left(t_{m_r}\right){\bf e}_0=\gamma^{\frac{|S\cap S_n(d)|}{2}}{\bf e}_{\frac{n}{2}}.
\end{eqnarray}
Note that $S\cap S_n(d),~S'$ and $S''$ are disjoint subsets of $S$, and $S=\left(S\cap S_n(d)\right)\cup S'\cup S''$. Therefore, if $H(t)$ is the transition matrix of $Cay\left(\Zl_n,S\right)$ then by Proposition \ref{4ab}, using $\left(\ref{21mt2}\right),~\left(\ref{3mt2}\right)$ and $\left(\ref{4mt2}\right)$, we obtain
\begin{eqnarray*}
\lim_{r\rightarrow\infty}H\left(t_{m_r}\right){\bf e}_0 &=& \lim_{r\rightarrow\infty}\left[H_{S\cap S_n(d)}\left(t_{m_r}\right)\cdot H_{S''}\left(t_{m_r}\right)\cdot H_{S'}\left(t_{m_r}\right)\right]{\bf e}_0 \\
&=& \lim_{r\rightarrow\infty} H_{S\cap S_n(d)}\left(t_{m_r}\right)\cdot \lim_{r\rightarrow\infty}H_{S''}\left(t_{m_r}\right){\bf e}_0,~\text{as}~t_{m_r}\in 2\pi\Zl\\
&=& \lim_{r\rightarrow\infty} H_{S\cap S_n(d)}\left(t_{m_r}\right)\cdot\zeta{\bf e}_0\\
&=&\zeta\gamma^{\frac{|S\cap S_n(d)|}{2}}{\bf e}_{\frac{n}{2}}.
\end{eqnarray*}
This proves our claim that $Cay\left(\Zl_n,S\right)$ admits PGST with respect to a sequence in $2\pi\Zl$.\\
For the second part of the proof, observe that, if we consider $\left|S\cap S_n(d)\right|\equiv 0~(mod~4)$ then we get the following equation instead of $\left(\ref{2mt2}\right)$.
\begin{eqnarray*}
\lim_{m\rightarrow\infty} H_{S\cap S_n(d)}(t_m){\bf e}_0=\lim_{m\rightarrow\infty} \left[\prod\limits_{x\in S\cap S_n(d),~x<\frac{n}{2}}H_{x}(t_m)\right]{\bf e}_0=\gamma^{\frac{|S\cap S_n(d)|}{2}}{\bf e}_0.
\end{eqnarray*}
Then imitating the rest of the proof of the first part, we obtain the desired result.
\end{proof}

Now we find even more circulant graphs that is either almost periodic or admits PGST. 

\begin{cor}\label{mt3}
Let all the conditions of Theorem \ref{mt2} are satisfied. Let $q\in2\Nl+1$, $N=nq$ and $D$ be any set of proper divisors of $N$ such that $qS\cap S_N(D)=\emptyset$. If $\left|S\cap S_n(d)\right|\equiv 2~(mod~4)$ then $Cay\left(\Zl_N,qS\cup S_N(D)\right)$ admits PGST with respect to a sequence in $2\pi\Zl$. Moreover, if $\left|S\cap S_n(d)\right|\equiv 0~(mod~4)$ then $Cay\left(\Zl_N,qS\cup S_N(D)\right)$ is almost periodic with respect to a sequence in $2\pi\Zl$. 
\end{cor}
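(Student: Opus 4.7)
The plan is to apply Proposition \ref{4ab} to factor the transition matrix of $Cay(\Zl_N, qS \cup S_N(D))$ as the product of those of $Cay(\Zl_N, qS)$ and $Cay(\Zl_N, S_N(D))$, and to analyze each factor separately. The hypothesis $qS \cap S_N(D) = \emptyset$ supplies the required disjointness, and $qS$ is symmetric in $\Zl_N$ because $S$ is symmetric, so Proposition \ref{4ab} applies.

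The central step is to identify $Cay(\Zl_N, qS)$ with the disjoint union of $q$ copies of $Cay(\Zl_n, S)$. Since $n = 2^k$ and $q$ is odd, $\gcd(q,n)=1$, so the subgroup $q\Zl_N$ of $\Zl_N$ has order $n$ and its cosets partition $\Zl_N$ into $q$ blocks of size $n$. As $qS \subseteq q\Zl_N$, no edge of $Cay(\Zl_N, qS)$ can cross between cosets, and for each $i \in \{0,1,\ldots,q-1\}$ the map $j \mapsto i + qj$ gives an isomorphism from $Cay(\Zl_n, S)$ onto the induced subgraph on $i + q\Zl_N$. Crucially, the vertices $0$ and $N/2 = q(n/2)$ both lie in the coset $q\Zl_N$ and correspond respectively to $0$ and $n/2$ in $Cay(\Zl_n, S)$.

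Next I would invoke Theorem \ref{mt2} applied to $Cay(\Zl_n, S)$: it furnishes a sequence $\{t_m\} \subset 2\pi\Zl$ with respect to which $Cay(\Zl_n, S)$ exhibits PGST between $0$ and $n/2$ when $|S \cap S_n(d)| \equiv 2 \pmod{4}$, and is almost periodic when $|S \cap S_n(d)| \equiv 0 \pmod{4}$. Because the transition matrix of a disjoint union is block-diagonal in the components, and the vertices $0,\,N/2$ sit in the same block, the same sequence $\{t_m\}$ produces PGST between $0$ and $N/2$ in $Cay(\Zl_N, qS)$ in the first case, and almost periodicity in the second.

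Finally, $S_N(D)$ is by definition a gcd-set of $\Zl_N$, so Theorem \ref{so} implies that $Cay(\Zl_N, S_N(D))$ is integral. Its transition matrix $H_D(t)$ therefore satisfies $H_D(t_m) = I$ for every $t_m \in 2\pi\Zl$, since $\exp(-i\lambda t_m) = 1$ whenever $\lambda$ is an integer. Combining via Proposition \ref{4ab}, at $t = t_m$ the transition matrix of $Cay(\Zl_N, qS \cup S_N(D))$ acts on ${\bf e}_0$ exactly as that of $Cay(\Zl_N, qS)$, yielding the claimed PGST or almost periodicity. I expect the main subtlety to be the coset bookkeeping --- in particular verifying $N/2 \equiv q(n/2) \pmod{N}$ and the symmetry of $qS$ --- after which the result is a direct synthesis of Theorem \ref{mt2}, Theorem \ref{so}, and Proposition \ref{4ab}.
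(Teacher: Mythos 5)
Your proposal is correct and follows essentially the same route as the paper: factor the transition matrix via Proposition \ref{4ab}, kill the $S_N(D)$ factor at times in $2\pi\Zl$ using integrality (Theorem \ref{so}), and transfer the conclusion of Theorem \ref{mt2} through the identification of $Cay\left(\Zl_N,qS\right)$ with $q$ disjoint copies of $Cay\left(\Zl_n,S\right)$. Your explicit coset bookkeeping (the isomorphism $j\mapsto i+qj$ and the check that $0$ and $N/2=q\cdot\frac{n}{2}$ lie in the same block) only fills in a step the paper states without proof, so there is nothing further to add.
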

\begin{proof}
Note that $S_N(D)$ is a gcd-set of $\Zl_N$. Therefore, by Theorem \ref{so}, the graph $Cay\left(\Zl_N,S_N(D)\right)$ is integral. If $H_{S_N(D)}(t)$ be the transition matrix of $Cay\left(\Zl_N,S_N(D)\right)$ then using spectral decomposition of $H_{S_N(D)}(t)$, we deduce that
\begin{eqnarray}\label{1mt3}
H_{S_N(D)}(t)=I,~t\in 2\pi\Zl.
\end{eqnarray}
Let $H_{qS}(t)$ be the transition matrix of $Cay\left(\Zl_N,qS\right)$. Since $qS\cap S_N(D)=\emptyset$, by Proposition \ref{4ab}, we find the transition matrix of $Cay\left(\Zl_N,qS\cup S_q(D)\right)$ as
\begin{eqnarray*}
H(t)=H_{qS}(t)\cdot H_{S_N(D)}(t),~t\in\Rl
\end{eqnarray*}
If $t\in 2\pi\Zl$ then using $\left(\ref{1mt3}\right)$, we get
\begin{eqnarray}\label{2mt3}
H(t)=H_{qS}(t)
\end{eqnarray}
It can be observed that $Cay\left(\Zl_N,qS\right)$ is isomorphic to $q$ disjoint copies of $Cay\left(\Zl_n,S\right)$. By Theorem \ref{mt2}, if $\left|S\cap S_n(d)\right|\equiv 2~(mod~4)$ then $Cay\left(\Zl_n,S\right)$ admits PGST with respect to a sequence in $2\pi\Zl$. Hence $Cay\left(\Zl_N,qS\right)$ admits PGST with respect to a sequence in $2\pi\Zl$ as well. Similarly, if $\left|S\cap S_n(d)\right|\equiv 0~(mod~4)$ then $Cay\left(\Zl_N,qS\right)$ is almost periodic with respect to a sequence in $2\pi\Zl$. Finally, by $\left(\ref{2mt3}\right)$, we have the desired result.
\end{proof}
Next we show that there are circulant graphs that fails to exhibit PGST. In Lemma $11$ and Corollary $12$ of \cite{pal4}, we observe that if $m\in \Nl$ and $n=mp$, where $p$ is an odd prime, then the cycle $Cay\left(\Zl_n,\left\lbrace1,n-1\right\rbrace\right)$ and its compliment does not exhibit PGST. Now we show that $Cay\left(\Zl_n,S\right)$ does not admit PGST if $p\nmid s$ for all $s\in S$. Hence, Lemma $11$ and Corollary $12$ of \cite{pal4}, becomes a special case of the following Theorem. We use some methods used in \cite{god4} to establish the result.
\begin{theorem}\label{mt4}
Let $m\in \Nl$ and $n=mp$, where $p$ is an odd prime. Then the circulant graph $Cay\left(\Zl_n,S\right)$ does not exhibit pretty good state transfer if $p\nmid s$ for all $s\in S$.  
\end{theorem}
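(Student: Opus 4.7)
My plan is to argue by contradiction, in the spirit of the Galois-type obstructions used in \cite{god4} and in Lemma~11 of \cite{pal4}. Suppose $Cay(\Zl_n,S)$ admits pretty good state transfer. By Lemma~\ref{l0}, $n$ must be even and the transfer occurs only between $0$ and $\frac{n}{2}$; since $p$ is odd, this forces $m$ to be even.

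Because the eigenvectors of a Cayley graph over an abelian group do not depend on the connection set (Proposition~\ref{3ab}), the derivation leading to Equation~(\ref{e0}) applies verbatim to $Cay(\Zl_n,S)$ with eigenvalues $\lambda_l=\sum_{s\in S}\omega_n^{ls}$, giving
\[H(t)_{0,\frac{n}{2}}=\frac{1}{n}\sum_{l=0}^{n-1}\exp[-i(\lambda_l t+l\pi)].\]
If $H(t_k)_{0,\frac{n}{2}}\to \gamma$ with $|\gamma|=1$, the arithmetic mean of $n$ unit complex numbers tends to a unit complex number, so by strict convexity of the unit disk each summand must also converge to $\gamma$. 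Thus
\[\exp(-i\lambda_l t_k)\to (-1)^l\gamma\quad\text{for every } l\in\Zl_n.\]

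The crux of the proof is a Gauss-sum vanishing. Using $\omega_n^m=\omega_p$, for any integer $c$ I compute
\[\sum_{j=0}^{p-1}\lambda_{jm+c}=\sum_{s\in S}\omega_n^{cs}\sum_{j=0}^{p-1}\omega_p^{js}=0,\]
where the hypothesis $p\nmid s$ is exactly what kills each inner sum. Multiplying the convergence statement above over the fibre $\{jm+c:0\le j\le p-1\}$ of the quotient $\Zl_n\to\Zl_m$, the left hand side collapses to $\exp(-it_k\cdot 0)=1$, while the right hand side equals $(-1)^{\sum_j(jm+c)}\gamma^p$. Since $m$ is even and $p$ is odd, $\sum_{j=0}^{p-1}(jm+c)=\frac{mp(p-1)}{2}+pc\equiv c\pmod 2$, so $\gamma^p=(-1)^c$. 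Taking $c=0$ and then $c=1$ forces the incompatible $\gamma^p=1$ and $\gamma^p=-1$, which is the desired contradiction.

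I expect the main obstacle to be identifying the correct family of index sets, namely the cosets $\{jm+c:0\le j\le p-1\}$, over which the hypothesis $p\nmid s$ converts into a clean cancellation; once this is in hand, the rest is routine, the only other care-requiring step being the standard equality-in-triangle-inequality lemma that upgrades convergence of the mean $H(t_k)_{0,\frac{n}{2}}$ to per-$l$ convergence of the individual phases.
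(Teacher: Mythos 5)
Your proof is correct and follows essentially the same route as the paper: the paper also starts from the cancellation $1+\omega_p^{s}+\cdots+\omega_p^{(p-1)s}=0$ (valid because $p\nmid s$), converts it into vanishing integer combinations of the circulant's eigenvalues supported on the cosets $\{jm+c\}$ (its Equations (\ref{e2})--(\ref{e5}) amount to your $c=1$ and $c=2$ relations combined into a difference), and then contradicts the limiting phases $\exp(-i\lambda_l t_k)\to(-1)^l\gamma$ forced by PGST between $0$ and $\tfrac{n}{2}$. Your packaging — working directly with the character-sum eigenvalues, using strict convexity instead of a subsequence extraction, and playing $c=0$ against $c=1$ to get $\gamma^p=1$ and $\gamma^p=-1$ — is a clean equivalent of the paper's argument rather than a genuinely different one.
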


\begin{proof}
If $m$ is an odd number then it is clear from Lemma \ref{l0} that the result follows. Thus, it is enough to prove the result for $m$ even. For an odd prime $p$, let $\omega_p$ be the primitive $p$-th root of unity. Then for $s\in S$, since $p\nmid s$, we have
\[1+\omega_p^s+\omega_p^{2s}+\ldots+\omega_p^{(p-1)s}=0.\]
This further yields
\begin{eqnarray}\label{e2}
1+2\sum\limits_{r=1}^{\frac{p-1}{2}}\cos{\left(\frac{2rs\pi}{p}\right)}=0.
\end{eqnarray}
Multiplying both sides of $\left(\ref{e2}\right)$ by $2\cos{\left(\frac{2s\pi}{n}\right)}$ we obtain the following relation of eigenvalues of $C_n$ (as given in $\left(\ref{e}\right)$).
\begin{eqnarray}\label{e3}
\lambda_s + \sum\limits_{r=1}^{\frac{p-1}{2}}\lambda_{s(mr+1)} + \sum\limits_{r=1}^{\frac{p-1}{2}} \lambda_{s(mr-1)}=0,~s\in S.
\end{eqnarray}
Similarly multiplying $\left(\ref{e2}\right)$ by $2\cos{\left(\frac{4s\pi}{n}\right)}$ gives
\begin{eqnarray}\label{e4}
\lambda_{2s} + \sum\limits_{r=1}^{\frac{p-1}{2}}\lambda_{s(mr+2)} + \sum\limits_{r=1}^{\frac{p-1}{2}} \lambda_{s(mr-2)}=0,~s\in S.
\end{eqnarray}
Note that if $\lambda'_l$ is an eigenvalue of $Cay\left(\Zl_n,S\right)$ then $\lambda'_l=\frac{1}{2}\sum\limits_{s\in S}\lambda_{sl}$. Hence, by Equation $\left(\ref{e3}\right)$ and $\left(\ref{e4}\right)$, we obtain
\begin{eqnarray}\label{e5}
\left(\lambda'_2 - \lambda'_1\right) + \sum\limits_{r=1}^{\frac{p-1}{2}}\left(\lambda'_{mr+2} - \lambda'_{mr+1}\right) + \sum\limits_{r=1}^{\frac{p-1}{2}}\left(\lambda'_{mr-2}-\lambda'_{mr-1}\right)=0.
\end{eqnarray}
If $Cay\left(\Zl_n,S\right)$ admits PGST then, by Equation $\left(\ref{e0}\right)$, we have a sequence of real numbers $\left\lbrace t_k\right\rbrace$ and a complex number $\gamma$ with $|\gamma|=1$ such that 
\[\lim_{k\rightarrow\infty}\sum\limits_{l=0}^{n-1}\exp{\left[-i\left(\lambda'_l t_k+l\pi\right)\right]}=n\gamma.\]
Since the unit circle is compact, we have a subsequence $\left\lbrace t'_k\right\rbrace$ of $\left\lbrace t_k\right\rbrace$ such that
\[\lim_{k\rightarrow\infty}\exp{[-i\left(\lambda'_l t'_k+l\pi\right)}=\gamma,\text{ for }0\leq l\leq n-1.\] 
Moreover, this gives
\[\lim_{k\rightarrow\infty}\exp{[-i\left(\lambda'_{l+1}-\lambda'_l\right)t'_k]}=-1.\]
Denoting the term in left hand side of Equation $\left(\ref{e5}\right)$ as $L$, it is evident that
\[\lim_{k\rightarrow\infty}\exp{\left(-iLt'_k\right)}=-1,\]
but this is absurd as $L=0$. Hence the circulant graph $Cay\left(\Zl_n,S\right)$ does not exhibit PGST if $p\nmid s$ for all $s\in S$.
\end{proof}

\section{Conclusions}
We have considered PGST on circulant graphs. In \cite{pal4}, the authors have classified all cycles and their compliments admitting PGST. There it is established that a cycle $C_n$ exibits PGST if and only if $n$ is a power of $2$. In this article, we have found several more circulant graphs on $n$ vertices exhibiting PGST whenever $n$ is a power of $2$. Observe that, if $n$ has an odd prime factor then there is no cycles on $n$ vertices exhibiting PGST. However, in that case, we have found that there are other circulant graphs admitting PGST. Also we have found a large number of circulant graphs that do not admit PGST. The results extablished in this article generalize some of those results appearing in \cite{pal4}. However, more work is required to classify all circulant graphs exhibiting PGST.

\end{document}